\date{}
\renewcommand{\uppercasenonmath}[1]{}
\theoremstyle{plain}
\newtheorem{theorem}{Theorem}
\newtheorem{proposition}[theorem]{Proposition}
\newtheorem{lemma}[theorem]{Lemma}
\newtheorem{corollary}[theorem]{Corollary}
\newtheorem*{open question}{Open Question}
\theoremstyle{definition}
\newtheorem*{acknowledgement}{Acknowledgement}
\theoremstyle{remark}
\newtheorem{remark}[theorem]{Remark}
\newcommand{\coker}{\mbox{\rm coker}}
\newcommand{\Id}{\mathrm{Id}}
\def\Hom{{\rm Hom}}
\def\Ext{{\rm Ext}}
\def\QF{{\rm QF}}
\def\Ker{{\rm Ker}}
\def\coker{{\rm coker}}
\def\Im{{\rm Im}}
\def\Coker{{\rm Coker}}
\def\gld{{\rm gld}}
\def\Max{{\rm Max}}
\def\m{{\f DW}}
\def\p{\frak p}
\def\m{\frak m}
\begin{document}
\begin{center}
{\large  \bf Some remarks on almost projective envelopes}

\vspace{0.5cm}
Xiaolei Zhang\\
School of Mathematics and Statistics, Shandong University of Technology\\
Zibo 255049, China\\
Corresponding author,	E-mail: zxlrghj@163.com\\
Wei Qi\\
School of Mathematics and Statistics, Shandong University of Technology\\
Zibo 255049, China\\
E-mail: qwrghj@126.com\\
Dechuan Zhou\\
Southwest University of Science and Technology\\ Mianyang 621010, China\\
E-mail: dechuan11119@sina.com
\end{center}

\bigskip
\centerline { \bf  Abstract}
\bigskip
\leftskip10truemm \rightskip10truemm \noindent

Let $R$ be a commutative ring. An $R$-module $M$ is said to be almost projective if $\Ext^1_R(M, N) = 0$ for any $R_\m$-module $N$ and any maximal ideal  $\m$ of $R.$ In this paper, we investigate  rings $R$ over which every $R$-module has an almost projective envelope. We also characterize rings $R$ over which every $R$-module has an almost projective envelope with some special properties.
\\
\vbox to 0.3cm{}\\
{\it Key Words:} almost projective module, envelope, coherent ring, locally Artinian ring.\\
{\it 2020 Mathematics Subject Classification:}  13C10; 13C11.

\leftskip0truemm \rightskip0truemm
\bigskip
Throughout this paper, $R$ is a commutative ring with identity and all $R$-modules are unitary.  We always denote by $\Max(R)$ the spectrum of all maximal ideals of $R$, $K.\dim(R)$ the Krull dimension of $R$ and w.\gld$(R)$ the weak global dimension of $R$.

The notion of locally free modules, in the sense that $R$-modules  $M$ satisfying that $M_\m$ is a free $R_\m$-module for any  maximal ideal $\m$  of $R,$ is a key terminology to the areas of both commutative algebra and algebra geometry. Recently, Zhou et al. \cite{ZKWH20} found that an $R$-module $M$ is locally free if and only if  $\Ext^1_R(M, N) = 0$ for any $R_\m$-module $N$ and any   maximal ideal $\m$  of $R,$ and the later is called almost projective modules in \cite{ZKWH20}. The authors also defined the almost global dimension $A.\gld(R)$ of a ring $R$ in terms of almost projective modules,   and obtained that rings $R$ with $A.\gld(R)=0$ are exactly von Neumann regular rings, and domains with $A.\gld(R)\leq 1$ are exactly almost Dedekind domains. Subsequently, Zhou et al. \cite{ZKZH21} studied the existence of almost projective covers, and showed every $R$-module has an almost projective cover if and only if $R$ is a  locally perfect ring, i.e., rings $R$ satisfying $R_\m$ is a   perfect ring for any  maximal ideal $\m$  of $R.$

It is well known that every $R$-module has a projective envelope if and only if $R$ is an Artinian ring (see  \cite{AM93}). If $R$ is a domain or has w.\gld$(R)<\infty$, then every $R$-module has a flat envelope if and only if w.\gld$(R)\leq 2$ (see \cite{AM93,E81,M86}). However, the characterizations of rings with existences of flat envelopes for all $R$-modules remain open  in general case. The main motivation is to investigate the existence of almost projective envelopes. We show that every $R$-module has an almost projective envelope implies that $R$ is a coherent locally Artinian ring. The converse is also true in some cases (see Theorem \ref{main}).  We also characterize rings  over which every $R$-module has a monoic, an epimorphic almost projective envelope or an almost projective envelope with unique mapping (see Theorem \ref{u}, Theorem \ref{e}, Theorem \ref{m}).

\section{Main results}

Let $M$ be an $R$-module and $\mathscr{P}$ be a  class  of $R$-modules. Recall from \cite{EJ11} that  an $R$-homomorphism $f: M\rightarrow P$ with  $P\in \mathscr{P}$ is said to be a $ \mathscr{P}$-preenvelope  provided that  for any $P'\in \mathscr{P}$, the natural homomorphism  $\Hom_R(P,P')\rightarrow \Hom_R(M,P')$ is an epimorphism. If, moreover, every endomorphism $h$ such that $f=hf$  is an automorphism, then $f: M\rightarrow P$ is said to be a
$\mathscr{P}$-envelope.
\begin{lemma}\label{ext}\cite[Lemma 2.1.2]{X96}
	Let $f:M\rightarrow P$ be a $\mathscr{P}$-envelope, and assume $\mathscr{P}$ is closed extensions. Set $A=\Coker(f)$. Then $\Ext_R^1(A,P')=0$ for all $P'\in \mathscr{P}$.
\end{lemma}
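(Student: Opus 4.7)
The plan is to take an arbitrary extension $0 \to P' \to E \xrightarrow{g} A \to 0$ with $P' \in \mathscr{P}$ and show that it splits; equivalently, I construct a section $A \to E$. The strategy, which is standard for envelope arguments, is to pull the extension back along the canonical projection $\pi: P \to A = \Coker(f)$. The resulting pullback $F$ will itself lie in $\mathscr{P}$ (since $\mathscr{P}$ is closed under extensions), and I will exploit the preenvelope/envelope properties of $f: M \to P$ to produce a splitting of the pulled-back sequence that descends along $\pi$.

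Concretely, I would form the pullback
\[
F = \{(p,e) \in P \oplus E : \pi(p) = g(e)\},
\]
with projections $\mathrm{pr}_P : F \to P$ and $\rho : F \to E$. This gives an exact sequence $0 \to P' \to F \xrightarrow{\mathrm{pr}_P} P \to 0$ together with a morphism of extensions onto the given one. Closure of $\mathscr{P}$ under extensions forces $F \in \mathscr{P}$. Next, since $\pi \circ f = 0 = g \circ 0$, the pullback universal property produces $h : M \to F$, $h(m) = (f(m),0)$, satisfying $\mathrm{pr}_P \circ h = f$ and $\rho \circ h = 0$. Because $f$ is a $\mathscr{P}$-preenvelope and $F \in \mathscr{P}$, I lift $h$ through $f$ to get $\phi : P \to F$ with $\phi \circ f = h$.

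The envelope property enters at the next step: the endomorphism $\alpha := \mathrm{pr}_P \circ \phi : P \to P$ satisfies $\alpha \circ f = \mathrm{pr}_P \circ h = f$, so by the defining property of an envelope $\alpha$ is an automorphism of $P$. Then $\sigma := \phi \circ \alpha^{-1} : P \to F$ satisfies $\mathrm{pr}_P \circ \sigma = \mathrm{id}_P$, which splits the pulled-back sequence. To descend this to a splitting of the original sequence, consider $\beta := \rho \circ \sigma : P \to E$; by construction $g \circ \beta = \pi$, and the key computation is $\beta \circ f = \rho \circ \phi \circ \alpha^{-1} \circ f$. Here I use that $\alpha \circ f = f$ implies $\alpha^{-1} \circ f = f$ (apply $\alpha^{-1}$ to both sides), so $\beta \circ f = \rho \circ \phi \circ f = \rho \circ h = 0$. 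Therefore $\beta$ vanishes on $\Im(f) = \ker(\pi)$ and factors as $\beta = \bar\beta \circ \pi$ for some $\bar\beta : A \to E$; since $g \circ \bar\beta \circ \pi = \pi$ and $\pi$ is surjective, $g \circ \bar\beta = \mathrm{id}_A$, and the extension splits.

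The one delicate point I expect to be the main obstacle is this last descent step. It is easy to split the pulled-back sequence, but a priori nothing forces that splitting to be compatible with the quotient $P \twoheadrightarrow A$. The trick that makes the argument go through is the identity $\alpha^{-1} \circ f = f$, which relies crucially on $\alpha$ being an honest automorphism (i.e., on the full envelope hypothesis, not just preenvelope). Everything else is a formal manipulation of the pullback square.
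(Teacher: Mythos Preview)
Your argument is correct; it is the standard Wakamatsu-type pullback argument, and every step checks out, including the descent step where $\alpha^{-1}\circ f=f$ is used to verify $\beta\circ f=0$. The paper does not supply its own proof of this lemma but merely cites \cite[Lemma 2.1.2]{X96}, and your proof is essentially the one found there, so there is nothing to contrast.
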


The proof of \cite[Proposition 3.4]{B10} can  be applied to a wider range of situations. We give its proof for completeness.
\begin{proposition}\label{key}
	Let $R$ be a  ring and $\mathscr{P}$ be a class of $R$-modules closed under isomorphisms and extensions. Let $M$ be an $R$-module and $f:M\rightarrow P$ be a $\mathscr{P}$-envelope. Suppose $\alpha:M\rightarrow M$ is an $R$-isomorphism and $\beta:P\rightarrow P$ is an $R$-homomorphism such that $\beta f=f\alpha$. Then  $\beta$ is also an  $R$-isomorphism.
\end{proposition}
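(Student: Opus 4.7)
The plan is to exploit the preenvelope property of $f$ to lift $f\alpha^{-1}$ against $f$, producing a partial inverse for $\beta$ at the level of $P$, and then invoke the automorphism condition built into the definition of an envelope to upgrade one-sided information to a genuine isomorphism.

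First I would use the preenvelope part of the hypothesis: since $P$ itself lies in $\mathscr{P}$, the map $\Hom_R(P,P)\to\Hom_R(M,P)$ induced by $f$ is surjective. Because $\alpha$ is an $R$-isomorphism, the composite $f\alpha^{-1}: M\to P$ is a well-defined morphism, so there exists $\gamma:P\to P$ with $\gamma f = f\alpha^{-1}$. The whole point of producing this $\gamma$ is that it serves as the candidate two-sided inverse for $\beta$.

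Next I would compute the two compositions $\beta\gamma$ and $\gamma\beta$ against $f$. Using the given relation $\beta f = f\alpha$ together with $\gamma f = f\alpha^{-1}$, a short calculation gives
\[
(\beta\gamma)f = \beta(f\alpha^{-1}) = (\beta f)\alpha^{-1} = f\alpha\alpha^{-1} = f,
\]
and symmetrically
\[
(\gamma\beta)f = \gamma(f\alpha) = (\gamma f)\alpha = f\alpha^{-1}\alpha = f.
\]
Now the envelope condition on $f$ applies verbatim: any endomorphism $h$ of $P$ with $hf = f$ is an automorphism. Hence both $\beta\gamma$ and $\gamma\beta$ are automorphisms of $P$.

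Finally I would deduce that $\beta$ itself is an isomorphism. Since $\beta\gamma$ is surjective, $\beta$ is surjective; since $\gamma\beta$ is injective, $\beta$ is injective. Thus $\beta$ is an $R$-isomorphism, as required. I do not expect any real obstacle here; the argument is purely formal and only uses that $f$ is an envelope and that $\alpha$ is invertible (notice that closure of $\mathscr{P}$ under extensions, while part of the standing hypothesis, is not actually needed in this particular argument — Lemma \ref{ext} is where that assumption earns its keep, not here).
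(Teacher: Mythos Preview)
Your argument is correct, and for injectivity it coincides with the paper's: the map you call $\gamma$ (satisfying $\gamma f = f\alpha^{-1}$) is exactly the map the paper produces and calls $g$, and both of you conclude that $\gamma\beta$ is an automorphism, hence $\beta$ is injective.

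The genuine difference is in the surjectivity step. The paper argues separately: it shows $\Im(f)\subseteq\Im(\beta)$, passes to the quotient $P/\Im(\beta)$, invokes closure of $\mathscr{P}$ under extensions together with Lemma~\ref{ext} to obtain $\Ext^1_R(\Coker f,P)=0$, lifts a certain map, and builds an auxiliary endomorphism $\Id_P - h\pi$ whose automorphism property forces $\Im(\beta)=P$. You instead observe the one-line symmetry $(\beta\gamma)f=f$, so $\beta\gamma$ is an automorphism and $\beta$ is surjective immediately. Your route is more elementary and, as you note, does not use the extension-closure hypothesis at all; the paper's route, while longer, illustrates how Lemma~\ref{ext} interacts with envelopes, which is perhaps why that presentation was chosen. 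Either way, your observation shows that the hypothesis ``closed under extensions'' in Proposition~\ref{key} can in fact be dropped.
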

\begin{proof} Note that there is an exact sequence of commutative diagram
	$$\xymatrix@R=20pt@C=25pt{
		M\ar[d]^{\alpha}\ar[r]^f& P \ar[r]^{\pi}\ar[d]^{\beta}&A\ar[r]  &0\\
		M\ar[r]^f&P \ar[r]^{\pi}&A\ar[r] &0\\}$$
	with $A=\coker(f)$.
	
	We will first show $\beta$ is a monomorphism. Since $\alpha$ is an isomorphism and $\beta f=f\alpha$, $M\xrightarrow{\beta f}B$ is also a $\mathscr{P}$-envelope. So, there exists an  $R$-homomorphism $g:P\rightarrow P$ such that the following  diagram is commutative
	$$\xymatrix@R=25pt@C=30pt{
		M\ar[rd]_{f}\ar[r]^{f\alpha}& P\ar@{.>}[d]^{g}\\
		&P\\}$$
	So $g\beta f=gf\alpha=f$. Since $f$ is a  $\mathscr{P}$-envelope, we have $g\beta$ is an automorphism. Hence $\beta$ is a monomorphism.
	
	Next, we will show $\beta$ is an epimorphism.  Since  $\mathscr{P}$ is closed under isomorphisms, $\beta(P)\in \mathscr{P}$. Since $M=\alpha(M)$, $f(M)=f(\alpha(M))=\beta(f(M))$. Hence $\Im(f)\subseteq \Im(\beta)$. And so there is a commutative diagram of epimorphisms
	$$\xymatrix@R=25pt@C=30pt{
		&P\ar@{->>}[rd]^{\rho}\ar@{->>}[ld]_{\pi}&\\
		A\ar@{->>}[rr]_{\tau}& &P/\Im(\beta)\\
	}$$
	Consider the exact sequence
	$$\xymatrix@R=25pt@C=30pt{
		&&&A\ar@{.>}[ld]_{h}\ar@{->>}[d]^{\tau}&\\
		0\ar[r]&P\ar[r]_{\beta}&P\ar[r]_{\rho}	&P/\Im(\beta) \ar[r]&0\\
	}$$ 	
	Since $\mathscr{P}$ is closed under extensions, we have $\Ext_R^1(A,P)=0$ by Lemma \ref{ext}. So there is $h:A\rightarrow P$ such that the above diagram is commutative. Consider the $R$-homomorphism $\gamma:\Id_P-h\pi:P\rightarrow P$. Then 	$\gamma f=f-h\pi f=f$. So $\gamma$ is an automorphism as $f$ is a  $\mathscr{P}$-envelope.  Hence $\gamma(P)=P$. Note that $\rho\gamma=\rho-\rho h\pi=\rho-\tau\pi=0$. So $P=\gamma(P)\subseteq\Ker(\rho)=\beta(P)$. Hence $\beta$ is an epimorphism.	
\end{proof}

\begin{corollary}\label{loc}
	Let $R$ be a  ring and $\p$ be a prime ideal of $R$. Let $M$ be an $R_\p$-module and $f:M\rightarrow P$ be an almost projective envelope.  Then  $f$ is a projective envelope of $M$ as an $R_\p$-module.
\end{corollary}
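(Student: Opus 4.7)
My plan is to verify three properties in sequence: that $P$ admits a canonical $R_\p$-module structure extending its $R$-structure (so that $f$ becomes $R_\p$-linear), that $P$ is projective as an $R_\p$-module, and that $f$ satisfies the envelope axioms in the category of $R_\p$-modules with respect to the class of projective $R_\p$-modules. The first two steps are clean applications of Proposition~\ref{key} and flat base change; the third is where I expect the real work.

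For the first step, I would fix an arbitrary $s \in R \setminus \p$ and apply Proposition~\ref{key} with $\alpha, \beta$ being the multiplications by $s$ on $M$ and $P$ respectively. Since $M$ is an $R_\p$-module, $\alpha$ is an $R$-automorphism; the commutativity $\beta f = f \alpha$ is immediate from $R$-linearity of $f$; and the class of almost projective modules is closed under extensions, a direct consequence of the $\Ext^1$-vanishing definition and the long exact sequence of Ext. Proposition~\ref{key} then forces $\beta$ to be an $R$-automorphism of $P$, so every element of $R \setminus \p$ acts invertibly on $P$, giving $P$ its $R_\p$-module structure; $f$ is automatically $R_\p$-linear because $R \to R_\p$ is an epimorphism of rings. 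For the second step, let $N$ be any $R_\p$-module; choosing any maximal ideal $\m \supseteq \p$, the factorization $R \to R_\m \to R_\p$ makes $N$ into an $R_\m$-module, so the almost projectivity of $P$ gives $\Ext^1_R(P, N) = 0$. Since $R \to R_\p$ is flat and $P, N$ are both $R_\p$-modules, flat base change yields $\Ext^1_{R_\p}(P, N) \cong \Ext^1_R(P, N) = 0$, and hence $P$ is projective over $R_\p$.

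For the third step, the key observation is the identity $\Hom_R(X, Y) = \Hom_{R_\p}(X, Y)$ for $R_\p$-modules $X, Y$, which holds because $R \to R_\p$ is a ring epimorphism and which converts freely between $R$- and $R_\p$-linearity. The minimality condition is then immediate: an $R_\p$-linear $h : P \to P$ with $h f = f$ is in particular an $R$-linear endomorphism satisfying the same equation, hence an automorphism by the almost projective envelope property of $f$ in the category of $R$-modules. The preenvelope condition is where I expect the main obstacle: given a projective $R_\p$-module $Q$ and an $R_\p$-linear $g : M \to Q$, I must produce $h : P \to Q$ with $h f = g$; this would follow by invoking the $R$-envelope property of $f$ provided $Q$ is almost projective as an $R$-module. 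I expect the paper to establish this last point by noting that a projective $R_\p$-module is a summand of some $R_\p^{(I)}$ and that $R_\p$ is locally free over $R$ in the settings where the corollary is applied, which closes the argument.
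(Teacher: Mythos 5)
Your first two steps, together with the minimality half of your third step, reproduce the paper's argument: the paper likewise applies Proposition~\ref{key} to multiplication by each $s\in R\setminus\p$ to put an $R_\p$-structure on $P$, and then invokes the identification $\Hom_R(X,Y)=\Hom_{R_\p}(X,Y)$ for $R_\p$-modules. Your justification that $P$ is projective over $R_\p$ (restriction along $R\to R_\m\to R_\p$ for some $\m\supseteq\p$, plus $\Ext^1_{R_\p}(P,N)\cong\Ext^1_R(P,N)$ for the flat epimorphism $R\to R_\p$) is correct and is actually spelled out more carefully than in the paper, which compresses this into the parenthetical ``(thus a projective)''.

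The preenvelope condition, which you rightly flag as the real difficulty, is a genuine gap, and the escape route you anticipate is not available. What is needed is that every projective $R_\p$-module be almost projective over $R$ (equivalently, that $R_\p^{(I)}$ be locally free over $R$), and this fails badly in general. Take $R=\mathbb{Z}$ and $\p=(0)$, so $R_\p=\mathbb{Q}$: then $\mathbb{Q}_{(p)}=\mathbb{Q}$ is not a free $\mathbb{Z}_{(p)}$-module, so $\mathbb{Q}$ is not almost projective over $\mathbb{Z}$. Worse, every almost projective $\mathbb{Z}$-module $X$ is locally free, hence embeds into $\prod_p X_{(p)}$ with each $X_{(p)}$ free over $\mathbb{Z}_{(p)}$; since a free $\mathbb{Z}_{(p)}$-module has no nonzero $p$-divisible subgroup, $\Hom_{\mathbb{Z}}(\mathbb{Q},X)=0$ for every such $X$. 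Therefore $\mathbb{Q}\to 0$ is an almost projective envelope of the $\mathbb{Q}$-module $\mathbb{Q}$, yet it is certainly not a projective preenvelope of $\mathbb{Q}$ in the category of $\mathbb{Q}$-modules. So the statement fails in the generality in which it is posed, and your third step cannot be completed without an extra hypothesis. The hypothesis you guess at (local freeness of $R_\p$ over $R$, as in condition $(4)$ of Theorem~\ref{main}) would indeed close the argument, but it is neither part of the corollary nor available where the paper applies it, since in the first part of the proof of Theorem~\ref{main} only coherence of $R$ has been established at that point. To be fair to you, the paper's own proof is silent on exactly this issue: the isomorphism $\Hom_R(M,N)\cong\Hom_{R_\p}(M,N)$ only shows that $f$ is an envelope with respect to the class of almost projective $R$-modules that happen to be $R_\p$-modules, and this class may be strictly smaller than the class of all projective $R_\p$-modules.
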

\begin{proof} Let $s\in R-\p$. Then the multiplication by $s$ is an isomorphism of $R_\p$-modules. Note that the class of almost projective modules is closed under isomorphisms and extensions.
	It follows by Proposition \ref{key} that $P$ is also an $R_\p$-module.  Since $\Hom_R(M,N)\cong \Hom_{R_\p}(M,N)$ for any $R_\p$-module $N$, $f$ is also an almost projective (thus a projective) envelope as  $R_\p$-modules.
\end{proof}

It is well-known that the every $R$-module has a projective envelope if and only if $R$ is an Artinian ring (see \cite[Corollary 3.6]{AM93}). However, the existence of flat envelope for every $R$-module is still open for general  rings.  The next Theorem investigates rings $R$ over which every $R$-module has an almost projective envelope.
\begin{theorem}\label{main}
	Let $R$ be a ring. Suppose every $R$-module has an almost projective envelope. Then $R$ is a coherent locally Artinian ring.
	
	Moreover, suppose $R$ satisfies one of the following conditions:
	\begin{enumerate}
		\item $R$ is a reduced ring;
		\item w.\gld$(R)<\infty;$
		\item $R$ is a Noetherian ring;
		\item $R_\m$ is a  finitely generated projective $R$-module for each $\m\in\Max(R)$.
	\end{enumerate}
	Then the converse is also true.
\end{theorem}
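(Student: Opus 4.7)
\emph{Necessity.} I would first establish that $R$ is locally Artinian via Corollary \ref{loc}. Fix $\m\in\Max(R)$ and let $N$ be an arbitrary $R_\m$-module. By hypothesis $N$ has an almost projective envelope as an $R$-module, and Corollary \ref{loc} upgrades this to a projective envelope of $N$ over $R_\m$. So every $R_\m$-module admits a projective envelope over $R_\m$, and \cite[Corollary 3.6]{AM93} forces $R_\m$ to be Artinian. Next I would observe that over a locally Artinian ring the almost projective (i.e.\ locally free) modules and the flat modules coincide: locally freeness trivially implies flatness, and conversely if $M$ is flat then each $M_\m$ is flat over the Artinian hence perfect local ring $R_\m$, hence free. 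Consequently every $R$-module has a flat preenvelope, and by the classical Enochs--Chase characterization $R$ is coherent.

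\emph{Sufficiency under (1)--(3).} Assume now that $R$ is coherent and locally Artinian, so by the coincidence above it suffices to construct a flat envelope for every module. Cases (1) and (2) both reduce $R$ to a von Neumann regular ring: in (1) the nilpotent maximal ideal of each Artinian local $R_\m$ is killed by reducedness; in (2) an Artinian local ring of finite weak global dimension must be a field. Over a von Neumann regular ring every module is locally free, so the identity map is the desired envelope. For (3), a Noetherian locally Artinian ring has $\dim R=0$ and is therefore itself Artinian; its finite product decomposition into local Artinian rings shows that almost projective coincides with projective in this setting, so the classical projective envelope from \cite[Corollary 3.6]{AM93} serves as the required almost projective envelope.

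\emph{Sufficiency under (4).} Case (4) is where I expect the main difficulty: the hypothesis that each $R_\m$ is a finitely generated projective $R$-module does not collapse $R$ to von Neumann regular or Artinian, so envelopes must be produced more directly. My plan is, given an $R$-module $M$, to start from the projective covers $P_\m \twoheadrightarrow M_\m$ that exist over the perfect rings $R_\m$, to use the fact that under (4) each $R_\m$-free module is already $R$-projective (and hence almost projective), and to exploit coherence to obtain an almost projective preenvelope of $M$ via an appropriate product construction; Proposition \ref{key} together with a standard minimality argument should then refine this preenvelope to an envelope. The principal obstacle I foresee is checking the envelope property globally: one must verify that the patched object enjoys the $R$-linear envelope property and not merely local preenvelope properties over each $R_\m$, and I would address this by using the $R$-projectivity of each $R_\m$ to control how $R_\m$-linear data glue across distinct maximal ideals.
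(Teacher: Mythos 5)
Your treatment of the necessity and of cases (1)--(3) is correct and essentially matches the paper. For coherence the paper argues directly that $\prod_{\Lambda}R$ is a direct summand of any almost projective preenvelope of itself (hence flat, hence Chase applies), while you route the same argument through the identification of almost projective with flat modules over locally Artinian rings and the preenvelope characterization of coherence; the locally Artinian part via Corollary \ref{loc} and \cite[Corollary 3.6]{AM93}, and the reductions of (1), (2) to von Neumann regular and of (3) to Artinian, are exactly the paper's.

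Case (4) is where the theorem has actual content, and there your proposal is a plan rather than a proof, and the plan points in the wrong direction. Projective \emph{covers} $P_\m\twoheadrightarrow M_\m$ over the perfect rings $R_\m$ are irrelevant: an envelope is a map \emph{into} the class, and the local data needed are the projective envelopes $M_\m\rightarrow Q^\m$ over the Artinian rings $R_\m$ (which exist by \cite[Corollary 3.6]{AM93}), not covers. The paper's construction is: start from a flat ($=$ almost projective) preenvelope $f:M\rightarrow P$, which coherence provides; check that each localization $f_\m:M_\m\rightarrow P_\m$ is a projective preenvelope over $R_\m$; cut $P_\m$ down to a direct summand $Q^{\m}$ through which $f_\m$ factors as a projective envelope $f'_\m$ (\cite[Lemma 5.3]{g}); and then glue by setting $Q=\bigcap_{\m\in\Max(R)}\psi^{-1}(Q^{\m})\subseteq P$. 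Hypothesis (4) is consumed precisely in making this glueing work: because $R_{\m'}$ is finitely generated projective over $R$, the functor $-\otimes_R R_{\m'}$ commutes with the (generally infinite) intersection defining $Q$, and since $(P_\m)_{\m'}=0$ for $\m\neq\m'$ (as $R_\m$ is Artinian) one computes $Q_{\m'}=Q^{\m'}$; thus $Q$ is almost projective, $f':M\rightarrow Q$ is a preenvelope, and the envelope property follows by testing endomorphisms locally against the projective envelopes $f'_\m$. None of this is supplied by ``an appropriate product construction'' or ``a standard minimality argument,'' and Proposition \ref{key} plays no role in this step (it is used, via Corollary \ref{loc}, only in the necessity direction). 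You correctly flag the global envelope property as the obstacle, but resolving it requires the intersection construction and the localization computation above, which your sketch does not contain.
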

\begin{proof} Suppose every $R$-module has an almost projective envelope. We will first show $R$ is a coherent ring. Let $\Lambda$ be any index set and $\prod\limits_{i\in\Lambda}R^{(i)}\rightarrow P$ be an almost projective preenvelope with $R^{(i)}\cong R$ for each $i\in\Lambda$. Since $R$	is almost projective, Then is a decomposition $\prod\limits_{i\in\Lambda}R^{(i)}\rightarrow P\rightarrow R^{(i)}$ for each $i\in\Lambda$. Hence the composition $\prod\limits_{i\in\Lambda}R^{(i)}\rightarrow P\rightarrow \prod\limits_{i\in\Lambda}R^{(i)}$ is the identity of $\prod\limits_{i\in\Lambda}R^{(i)}$. So $\prod\limits_{i\in\Lambda}R^{(i)}$ is a direct summand of $P$, and so is also an almost projective (and thus a flat) $R$-module. Consequently, $R$ is a coherent ring.  Next, we will show $R$ is a locally Artinian ring. Let $\p$ be any prime ideal of $R$, $M$ be any $R_\p$-module, and $f:M\rightarrow P$ be an almost projective envelope. Then $f$ is a projective envelope of $M$ by Corollary \ref{loc}. Hence $R_\p$ is an Artinian ring by \cite[Corollary 3.6]{AM93}. In conclusion, $R$ is a coherent locally Artinian ring.
	
	On the other hand, we first suppose $R$ is a coherent locally Artinian ring satisfying $(1)$ or $(2)$. Then $R$ is a reduced ring with K.$\dim(R)=0$ (see \cite[Corollary 4.24]{g} for (2)). That is $R$ is a von Neumann regular ring (see \cite[Page 5]{H88}).  So every $R$-module is  almost projective by \cite[Theorem 4.8]{ZKWH20}. Hence every $R$-module has an almost projective envelope. Next we suppose $R$ is a coherent locally Artinian ring satisfying   $(3)$. Then $R$ is an Artinian ring. So every almost projective module is projective. Consequently, the result follows by \cite[Corollary 3.6]{AM93}.

	Finally we suppose $R$ is a coherent locally Artinian ring satisfying   $(4)$. Let $M$ be an $R$-module and $f:M\rightarrow P$ be an almost projective envelope. For each $\m\in\Max(R)$, we claim that $f_\m:M_\m\rightarrow P_\m$ is a projective preenvelope of $M_\m$. Indeed,  let $g:M_\m\rightarrow X$ be an $R_\m$-homomorphism with $X$ a projective $R_\m$-module, so it is a flat $R$-module, and hence an almost projective $R$-module module by assumption.  Since $f$ is an almost projective preenvelope, there is an $R$-homomorphism $h:P\rightarrow X$ such that the following diagram is commutative
	
	$$\xymatrix@R=20pt@C=40pt{
		M\ar[dd]_{\phi}\ar[rr]^{f}&& P\ar[dd]^{\psi}\ar@{.>}[ld]^{h}\\
		&X&\\
		M_\m\ar[ru]^{g}\ar[rr]^{f_\m}&& P_\m\\
	}$$
	where $\phi$ and $\psi$ are natural homomorphisms. By localizing at $\m$, we have an $R_\m$-homomorphism $h_\m:P_\m\rightarrow X$ such that $g=h_\m f_\m$. Hence $f_\m$ is a projective preenvelope as $R_\m$-modules. It follows by \cite[Lemma 5.3]{g} that there is a direct summand $Q^{\m}$ of $P_\m$ such that $\Im(f_\m)\subseteq Q^{\m}$ and $f'_\m:M_\m\rightarrow  Q^{\m}$, having the same correspondence rule as $f_\m$, is a projective envelope of $M_\m$ as $R_\m$-modules. Consider the following commutative diagram
	$$\xymatrix@R=20pt@C=40pt{
		M\ar[d]_{\phi}\ar[rr]^{f}&& P\ar[d]^{\psi}\\
		M_\m\ar[rd]_{f'_\m}\ar[rr]^{f_\m}&& P_\m\\
		& Q^{\m}\ar@{^{(}->}[ru]^{i}&\\}$$
	Set $Q=\bigcap\limits_{\m\in\Max(R)}\psi^{-1}(Q^{\m})$. Then $\Im(f)\subseteq Q$. Set $f':M\rightarrow Q$ to have the same correspondence rule as $f$. We claim that $f'$ is the almost projective envelope of $M$. First we will show $Q$ is an almost projective $R$-module. Indeed, since $R_{\m'}$ is a finitely generated projective $R$-module for each $\m'\in\Max(R)$, then
	\begin{align*}
		Q_{\m'}&=(\bigcap\limits_{\m\in\Max(R)}\psi^{-1}(Q^{\m}))_{\m'} \\
		&=(\bigcap\limits_{\m\in\Max(R)}\psi^{-1}(Q^{\m}))\otimes_RR_{\m'}\\
		&=\bigcap\limits_{\m\in\Max(R)}(\psi^{-1}(Q^{\m})\otimes_RR_{\m'})\\
		&=\bigcap\limits_{\m\in\Max(R)}\psi^{-1}(Q^{\m})_{\m'}\\
		&=Q^{\m'}\\
	\end{align*}
	The last equality follows by  $(\psi^{-1}(Q^{\m}))_{\m'}=P_{\m'}$ and $(\psi^{-1}(Q^{\m'}))_{\m'}=Q^{\m'}$ for any  distinct two maximal ideals $\m,\m'$. Indeed, we trivially have $(\psi^{-1}(Q^{\m'}))_{\m'}=Q^{\m'}$. Now let  $\m,\m'$ be two distinct two maximal ideals of $R$. since $R_\m$ is Artinian, we have $(P_{\m})_{\m'}\cong (F_{\m})_{\m'}=0$ for some free $R$-module $F$. Hence $Q^{\m}_{\m'}$, which can be seen as a submodule of $(P_{\m})_{\m'}$, is also equal to $0.$ Localizing the last commutative diagram at $\m'$, we have $(\psi^{-1}(Q^{\m}))_{\m'}=(\psi^{-1}(0))_{\m'}=P_{\m'}$.
	So $Q$ is an almost projective $R$-module. And consequently, $f'$ is an almost projective preenvelope.  Now let $h$ be an endomorphism of $Q$ such that $hf'=f'$. Then $h_\m f'_\m=f'_\m$ for each $\m\in \Max(R)$. So $h_\m$ is an automorphism since $f'_\m$ is a projective envelope as $R_\m$-modules. Hence $f$ is also an  automorphism. In conclusion, $f'$ is an  almost projective envelope.
\end{proof}

\begin{remark} Note that the conditions $(1), (2)$ and $(3)$ in Theorem \ref{main} are not essential. Indeed, let $R$ be a non-field Artinian ring. Then $(1),(2)$ do not hold. However, since  every almost projective $R$-module is projective over Artinian rings, every $R$-module has an almost projective envelope by \cite[Corollary 3.6]{AM93}. Let $R$ be a non-semisimple von Neumann regular ring. Then $(3)$ does not hold. Since every $R$-module is almost projective over  von Neumann regular rings (see \cite[Theorem 4.8]{ZKWH20}), every $R$-module trivially has an almost projective envelope.  However,
	we do not known whether the condition $(4)$ is essential. For further study, we propose the following question:
	
	\textbf{Question:}	How to characterize rings $R$ over which every $R$-module has an almost projective envelope?\\
	Since over locally Artinian rings, the class of flat modules is equal to that of almost projective modules by \cite[Theorem 2.5]{ZKWH20}. So to solve the above question, it is only need to characterize rings $R$ over which  every $R$-module has a flat envelope.
\end{remark}

In the rest of this paper, we will mainly characterize rings $R$ over which every $R$-module has a monoic, an epimorphic almost projective envelope or an almost projective envelope with the unique mapping property.

Let $M$ be an $R$-module and $\mathscr{P}$ be a  class  of $R$-modules.  Let $f: M\rightarrow P$ be a  $\mathscr{P}$-envelope of  $M$. If for any $f':M\rightarrow P'$ with $P'\in\mathscr{P}$, there exists a unique $g:P\rightarrow P'$ such that $f'=gf$. Then $f$ is said to a $\mathscr{P}$-envelope with the unique mapping property. If $f$ is a monomorphism (resp., an epimorphism), then $f$ is said to be a monoic  (resp., an epimorphic) $\mathscr{P}$-envelope.

\begin{theorem}\label{u} Let $R$ be a ring. Then the following statements are equivalent.
	\begin{enumerate}
		\item  $R$ is a von Neumann regular ring.
		\item Every $R$-module has an almost projective envelope with the unique mapping property.
	\end{enumerate}
\end{theorem}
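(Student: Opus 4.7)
The direction (1) $\Rightarrow$ (2) is immediate: when $R$ is von Neumann regular, every $R$-module is almost projective by \cite[Theorem 4.8]{ZKWH20}, so $\Id_M: M\to M$ is trivially an almost projective envelope with the unique mapping property. All the content lies in (2) $\Rightarrow$ (1).

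My strategy for (2) $\Rightarrow$ (1) is to reduce to the local ring $R_\m$ and derive a contradiction whenever $R_\m$ is not a field. Hypothesis (2) already forces $R$ to be coherent locally Artinian by Theorem \ref{main}, so each $R_\m$ is Artinian; showing each $R_\m$ is a field will give $R$ reduced with $K.\dim(R)=0$, equivalently von Neumann regular. I fix $\m\in\Max(R)$ and apply (2) to the residue field $R/\m$, obtaining an envelope $f: R/\m\to P$ with the unique mapping property. Since $\m$ annihilates $R/\m$, this module carries a natural $R_\m$-structure, so Corollary \ref{loc} upgrades $f$ to a projective envelope of $R/\m$ over $R_\m$; in particular $P\cong R_\m^{(I)}$ is free over the local ring $R_\m$.

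Next I transfer the unique mapping property to the $R_\m$-linear setting. Since $R_\m$ is Artinian local, $\m R_\m$ is nilpotent; any $x\in\m\setminus\m'$ for a distinct maximal $\m'$ is then a nilpotent element of $R_\m$ that would have to become a unit upon localizing at $\m'$, forcing $(R_\m)_{\m'}=0$. Hence every free $R_\m$-module $R_\m^{(J)}$ is locally free over $R$, therefore almost projective. If $g_1,g_2:P\to R_\m^{(J)}$ are $R_\m$-linear with $g_1f=g_2f$, the unique mapping property at the $R$-level then yields $g_1=g_2$, so $f$ is also a projective envelope of $R/\m$ over $R_\m$ with the unique mapping property.

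The main obstacle is to contradict the assumption $\m R_\m\neq 0$. Writing $f(1)=\sum a_i e_i$ in the basis of $P$, the relation $\m\cdot f(1)=0$ forces each $a_i\in\mathrm{soc}(R_\m)$; one also checks $f\neq 0$ since $\mathrm{soc}(R_\m)\neq 0$ supplies nonzero $R$-maps $R/\m\to R_\m$ (and $R_\m$ is almost projective as $R$-module) that the envelope must accommodate. Pick $i_0$ with $a_{i_0}\neq 0$; as $\ann_{R_\m}(a_{i_0})$ is a proper ideal containing $\m R_\m$ in the local ring $R_\m$, it equals $\m R_\m$. Choosing $0\neq c\in\m R_\m$, I define an $R_\m$-linear endomorphism $g$ of $P$ by $g(e_{i_0})=(1+c)e_{i_0}$ and $g(e_j)=e_j$ for $j\neq i_0$. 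Then $a_{i_0}c=0$ gives $gf=f$, yet $g\neq\Id_P$ since $c\neq 0$, contradicting the unique mapping property applied with $P'=P$, $g_1=\Id_P$, $g_2=g$. Hence $\m R_\m=0$, so $R_\m$ is a field for every $\m\in\Max(R)$ and $R$ is von Neumann regular.
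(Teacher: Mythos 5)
Your proof is correct, but the direction $(2)\Rightarrow(1)$ follows a genuinely different route from the paper's. The paper argues globally and homologically: given $0\rightarrow K\rightarrow P_0\rightarrow P_1$ with $P_0,P_1$ almost projective, it uses the unique mapping property to split $K$ off its own envelope, concluding that second syzygies are almost projective, hence $w.\gld(R)\leq 2$; combined with the coherent locally Artinian conclusion of Theorem \ref{main} and \cite[Corollary 4.24]{g}, this forces $R$ to be reduced of Krull dimension zero, i.e.\ von Neumann regular. You instead localize: after using Theorem \ref{main} and Corollary \ref{loc} to turn the envelope of $R/\m$ into a free $R_\m$-module $P$, you observe that the coefficients of $f(1)$ lie in $\mathrm{soc}(R_\m)$ and, when $\m R_\m\neq 0$, build the explicit non-identity endomorphism $e_{i_0}\mapsto(1+c)e_{i_0}$ fixing $\Im(f)$, directly violating uniqueness. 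Your supporting steps all check out: $(R_\m)_{\m'}=0$ for $\m'\neq\m$ via nilpotence of $\m R_\m$ makes free $R_\m$-modules almost projective over $R$, so the unique mapping property may legitimately be applied with target $P$, and $f\neq 0$ because a nonzero socle element yields a nonzero map $R/\m\rightarrow R_\m$ that the preenvelope must accommodate. What each approach buys: the paper's argument is shorter and reuses the weak-global-dimension machinery already needed for Theorems \ref{e} and \ref{m}, while yours is more elementary and self-contained (no appeal to \cite[Corollary 4.24]{g}) and pinpoints concretely where uniqueness fails at each non-field localization, at the cost of more bookkeeping about bases and socles.
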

\begin{proof} $(1)\Rightarrow (2)$ Suppose $R$ is a von Neumann regular ring. Then every $R$-module is almost projective by  \cite[Theorem 4.8]{ZKWH20}. So it is trivial that every $R$-module has an almost projective envelope with the unique mapping property.
	
	
	$(2)\Rightarrow (1)$ Suppose  every $R$-module has an almost projective envelope with the unique mapping property. We first have $R$ is a coherent locally Artinian ring by Theorem \ref{main}.  Let $0\rightarrow K\xrightarrow{\alpha} P_0\xrightarrow{\beta} P_1$ be an exact sequence with $P_0$ and $P_1$ almost projective. Let $f:K\rightarrow H$ be an almost projective envelope of $K$.
	Then there is a unique homomorphism $h:H\rightarrow P_0$ such that the following diagram commutative$$\xymatrix@R=25pt@C=30pt{
		&&H\ar@{.>}[d]^{h}&\\
		0\ar[r]&K\ar[ru]^f\ar[r]_{\alpha}&P_0\ar[r]_{\beta}	&P_1 \\
	}$$
	So $\beta hf=0$, and hence $\beta h=0$ since $f$ has unique mapping property. It follows that we have a unique homomorphism $g:H\rightarrow K$ with $\alpha g=h$ and so $\alpha gf=\alpha$. Hence $gf=\Id_K$. So $K$ is a direct summand of $H$. It follows that $K$ is an almost projective module. Hence $w.\gld(R)\leq 2$. It follows by Theorem \ref{main} and \cite[Corollary 4.24]{g}  that $R$ is a reduced ring with with K.$\dim(R)=0$, that is, $R$ is a von Neumann regular ring.
\end{proof}
\begin{theorem}\label{e} Let $R$ be a ring. Then the following statements are equivalent.
	\begin{enumerate}
		\item  $R$ is a von Neumann regular ring.
		\item Every $R$-module has an almost projective envelope which is an epimorphism.
	\end{enumerate}
\end{theorem}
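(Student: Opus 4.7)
The plan for $(1)\Rightarrow(2)$ is immediate: over a von Neumann regular ring every $R$-module is almost projective by \cite[Theorem 4.8]{ZKWH20}, so for any $R$-module $M$ the identity $\Id_M$ is an almost projective envelope, and it is trivially an epimorphism.

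For $(2)\Rightarrow(1)$, my plan is to mirror the strategy of Theorem \ref{u}: first use the structural conclusions of Theorem \ref{main} to pass to the local setting, then analyze the envelope of a residue field. First, Theorem \ref{main} gives that $R$ is a coherent locally Artinian ring, so it suffices to show $R_\m$ is a field for every $\m\in\Max(R)$. Fix such an $\m$ and regard the residue field $k=R/\m$ as an $R_\m$-module (elements of $R\setminus\m$ act invertibly on a field). Let $f:k\to P$ be the almost projective envelope of $k$ as an $R$-module, which is an epimorphism by hypothesis. By Corollary \ref{loc}, $P$ is in fact an $R_\m$-module and $f$ is simultaneously the projective envelope of $k$ as an $R_\m$-module. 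Since $k$ is simple and $f$ is surjective, either $P=0$ or $P\cong k$.

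In the case $P\cong k$, I would conclude that $k$ is a projective, hence free (as $R_\m$ is local), $R_\m$-module. Being cyclic and free forces $k\cong R_\m$, so $\m R_\m=0$ and $R_\m$ is a field, which is what we want.

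The main obstacle is ruling out the degenerate case $P=0$. If $P=0$, the envelope property forces every $R$-homomorphism from $k$ into any almost projective module to be zero. To obtain a contradiction, I would take $R_\m$ itself as a witness: since $R$ is zero-dimensional (every prime is maximal, as $R$ is locally Artinian), one checks that $(R_\m)_{\m'}=R_\m\otimes_R R_{\m'}=0$ for each $\m'\neq\m$, while $(R_\m)_\m\cong R_\m$, so $R_\m$ is locally free and therefore almost projective. Finally, since $R_\m$ is a nonzero Artinian local ring, its socle is nonzero and annihilated by $\m R_\m$, hence by $\m$; any nonzero socle element induces a nonzero $R$-homomorphism $k\to R_\m$, contradicting $P=0$. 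This socle argument is the key nontrivial input; once it is in place, the theorem follows.
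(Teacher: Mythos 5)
Your proof is correct, but it takes a genuinely different route from the paper for $(2)\Rightarrow(1)$. The paper argues globally: it takes an arbitrary submodule $K$ of a projective module $P$, notes that the factoring map $g:P'\to P$ satisfies $gf=i$ with $i$ injective, so the epimorphic envelope $f$ is also monic and hence an isomorphism; thus $K\cong P'$ is almost projective, hence flat, giving $w.\gld(R)\leq 1$, and then it invokes Theorem \ref{main} together with \cite[Corollary 4.24]{g} to conclude that $R$ is reduced and zero-dimensional, i.e.\ von Neumann regular. You instead localize and analyze the epimorphic envelope of the residue field $k=R/\m$ directly: the dichotomy $P=0$ or $P\cong k$ is forced by simplicity of $k$, the case $P\cong k$ immediately yields that $R_\m$ is a field, and the case $P=0$ is excluded by exhibiting a nonzero map from $k$ into the almost projective module $R_\m$ via the socle of the Artinian local ring. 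All the ingredients you use check out: zero-dimensionality of $R$ (from local Artinianness) does give $(R_\m)_{\m'}=0$ for $\m'\neq\m$, so $R_\m$ is locally free and hence almost projective, and the socle of a nonzero Artinian local ring is indeed nonzero and $\m$-annihilated. The paper's argument is shorter because it outsources the final step to the weak-global-dimension machinery of Glaz; yours is more self-contained and elementary at the local level, at the cost of having to verify the almost projectivity of $R_\m$, which is the one step that genuinely uses zero-dimensionality and would be worth writing out carefully.
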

\begin{proof}
	$(1)\Rightarrow (2)$ Trivially if  $R$ is a von Neumann regular ring, then  every $R$-module has an almost projective envelope which is an epimorphism.
	
	
	$(2)\Rightarrow (1)$	On the other hand, let $P$ be a projective module, $K$ a submodule of $P$ and $f:K\twoheadrightarrow P'$ an epimorphic almost projective envelope. Then there exists an homomorphism $g:P'\rightarrow P$ such that the following diagram commutative
	$$\xymatrix@R=25pt@C=30pt{
		&&P'\ar@{.>}[d]^{g}\\
		0\ar[r]&K\ar@{->>}[ru]^f\ar[r]_{i}&P\\
	}$$
	Thus $f$ is  also a monomorphism. Cosequently,  $K\cong P'$ is almost projective, and so is flat. It follows that  $w.\gld(R)\leq 1$. Consequently, $R$ is a von Neumann regular ring by Theorem \ref{main} and \cite[Corollary 4.24]{g} again.
\end{proof}

The authors in \cite[Theorem 2.2]{AM93} and \cite[Theorem 2.1]{S95} characterized rings over which 	every $R$-module has a monoic flat  envelope. The following result shows that the related property also characterizes rings over which 	every $R$-module has a monoic almost projective envelope.
\begin{theorem}\label{m} Let $R$ be a ring. Then the following statements are equivalent.
	\begin{enumerate}
		\item 	every $R$-module has an almost projective envelope which is a monomorphism.
		\item 	For each $\m\in\Max(R)$, $R_\m$ is either a field or a \QF-ring which is projective as an $R$-module.
		\item  There exists a family of primitive orthogonal idempotents $\{e_\alpha\mid \alpha\in \Gamma\}$ such that:
		\begin{enumerate}
			\item 	$Re_\alpha$ is a local non-field \QF-ring for each $\alpha\in \Gamma$;
			\item $R/\bigoplus\limits_{\alpha\in \Gamma}Re_\alpha$ is a von Neumam regular ring.
		\end{enumerate}
		\item every $R$-module has a flat envelope which is a monomorphism.
		\item $R$ is a subdirrct product of a locally \QF-ring with monoic flat $($almost projective$)$ envelope $R_1$ and a von Neumam regular ring $R_2$ that is flat as an $R$-module by restriction of scalars.
		\item $R$ is an essential subdirect product of rings $R_1$ and $R_2$ as in $(5).$
	\end{enumerate}
\end{theorem}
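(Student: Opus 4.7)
The plan is to reduce the characterization of monoic almost projective envelopes in (1) to the classical characterization of monoic flat envelopes in (4), and then invoke \cite[Theorem 2.2]{AM93} together with \cite[Theorem 2.1]{S95} for the remaining equivalences among (2)--(6). The bridge, already flagged in the remark following Theorem \ref{main}, is the fact that over a locally Artinian ring the class of flat $R$-modules coincides with the class of almost projective $R$-modules by \cite[Theorem 2.5]{ZKWH20}. Since an envelope is determined solely by the ambient class together with the universal property, an immediate consequence is that whenever these two classes coincide, a monoic flat envelope of $M$ is the same datum as a monoic almost projective envelope of $M$.

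First I would establish $(1)\Rightarrow (4)$. Assuming every $R$-module admits a monoic almost projective envelope, Theorem \ref{main} guarantees that $R$ is coherent and locally Artinian. Hence flat and almost projective modules coincide, so for any $R$-module $M$ the given monoic almost projective envelope is automatically a monoic flat envelope, yielding (4). For $(4)\Rightarrow (1)$, the classical characterization applied to (4) forces condition (2), and in particular every $R_\m$ is Artinian (being either a field or a \QF-ring), so $R$ is locally Artinian. The two module classes then agree again, and the monoic flat envelopes from (4) are in fact monoic almost projective envelopes, giving (1).

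The equivalences $(2)\Leftrightarrow (3)\Leftrightarrow (4)\Leftrightarrow (5)\Leftrightarrow (6)$ are exactly the content of \cite[Theorem 2.2]{AM93} and \cite[Theorem 2.1]{S95}, so I would cite them directly rather than reprove anything. The main obstacle I anticipate is purely bookkeeping: one must check that the formulations in the cited references match the primitive-idempotent language of (3) and the subdirect-product language of (5)--(6) verbatim. If a discrepancy appears, a short dictionary lemma translating between the two styles would patch the argument, but no new homological input beyond Theorem \ref{main} and the locally Artinian coincidence of the two module classes should be needed.
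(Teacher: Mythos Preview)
Your proposal is correct and follows essentially the same strategy as the paper: both reduce to the flat-envelope characterization of \cite{AM93,S95} via the coincidence of flat and almost projective modules over locally Artinian rings. The only organizational difference is that the paper proves $(1)\Rightarrow(3)$ directly by first showing each $R_\m$ is QF (using Corollary~\ref{loc} to obtain monoic projective $R_\m$-envelopes, whence injectives embed in projectives), whereas you invoke Theorem~\ref{main} as a black box and route through $(1)\Leftrightarrow(4)$; either way the same bridge lemma \cite[Theorem 2.5]{ZKWH20} carries the weight.
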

\begin{proof}
	$(1)\Rightarrow (3)$  We first show $R_\m$ is a \QF-ring for any $\m\in\Max(R)$. It follows by the proof of Theorem \ref{main} that every $R_\m$-module has a monoic projective envelope in the category of all $R_\m$-modules. So every injective $R_\m$-module is a submodule (hence a direct summand) of a projective $R_\m$-module. Hence every injective $R_\m$-module is projective, that is, $R_\m$ is a \QF-ring. So every flat module is almost projective. It  follows by the proof of \cite[Theorem 2.2]{AM93} that $(3)$ holds.
	
	$(2)\Rightarrow (1)$ Note that $R$ is a locally perfect ring, so every flat module is almost projective. Hence the result follows by \cite[Theorem 2.2]{AM93}.
	
	$(2)\Leftrightarrow (4)$ See \cite[Theorem 2.2]{AM93}.
	
	$(3)\Leftrightarrow (4)\Leftrightarrow (5)\Leftrightarrow (6)$ See \cite[Theorem 2.1]{S95}.
\end{proof}

\begin{acknowledgement}\quad\\	
	The second author was supported
	by the National Natural Science Foundation of China (No.12201361), the third author was supported
	by the National Natural Science Foundation of China (No. 12101515).
\end{acknowledgement}

\end{document}